\newcommand{\e}{\mathrm e}
\newcommand{\NN}{\mathbb{N}}
\newcommand{\QDir}{Q^{(D)}}
\newcommand{\QNeu}{Q^{(N)}}
\newtheorem{theorem}{Theorem}%[section]
\newtheorem{lemma}[theorem]{Lemma}
\newtheorem{proposition}[theorem]{Proposition}
\newtheorem{remark}[theorem]{Remark}
\newtheorem{example}[theorem]{Example}
\newcommand{\be}{\begin{equation}}
\newcommand{\ee}{\end{equation}}
\newcommand{\bea}{\begin{eqnarray*}}
	\newcommand{\eea}{\end{eqnarray*}}
\newcommand{\beq}{\begin{eqnarray}}
\newcommand{\eeq}{\end{eqnarray}}
\newtheorem{cor}[theorem]{Corollary}
\newtheorem{ass}[theorem]{Assumption}
\title[Spectral comparison results for Laplacians on discrete graphs]{Spectral comparison results for Laplacians on discrete graphs} 
\subjclass[2010]{}
\keywords{}
\author[P.~Bifulco]{Patrizio Bifulco\orcidlink{0009-0004-0628-374X}}
\author[J.~Kerner]{Joachim Kerner\orcidlink{0000-0003-0638-4183}}
\author[C.~Rose]{Christian Rose\orcidlink{0000-0002-1078-0798}}
\address{Patrizio Bifulco, Lehrgebiet Analysis, Fakult\"at Mathematik und Informatik, Fern\-Universit\"at in Hagen, D-58084 Hagen, Germany}
\email{patrizio.bifulco@fernuni-hagen.de}
\address{Joachim Kerner, Lehrgebiet Angewandte Stochastik, Fakult\"at Mathematik und Informatik, Fern\-Universit\"at in Hagen, D-58084 Hagen, Germany}
\email{joachim.kerner@fernuni-hagen.de}
\address{Christian Rose, Institut für Mathematik, Universit\"at Potsdam, D-14476 Potsdam, Germany}
\email{christian.rose@uni-potsdam.de}
\date{\today}
\thanks{
}
\begin{document}

	\begin{abstract} In the recent literature, various authors have studied spectral comparison results for Schr\"odinger operators with discrete spectrum in different settings including Euclidean domains and quantum graphs. In this note we derive such spectral comparison results in a rather general framework for general and possibly infinite discrete graphs. Along the way, we establish a discrete version of the local Weyl law whose proof does neither involve any Tauberian theorem nor the Weyl law as used in the continuous case.
	\end{abstract}
	
\maketitle

\section{Introduction}\label{sec:introduction}
The underlying idea of the spectral comparison results we shall study in this paper is to compare -- in a suitable way -- the eigenvalues of two different (self-adjoint) operators defined over the same structure, and where one of them is a perturbation of the other. 

A benchmark in this direction was obtained in \cite{RWY} (see also \cite{FrankLarson} for more recent developments) where the authors compare the sums of the first $n$ eigenvalues of the Neumann Laplacian and of a Robin Laplacian on a bounded domain in $\mathbb{R}^2$. Based on this, they derive an explicit expression for the limiting mean eigenvalue distance which involves the circumference, the area of the domain and the Robin parameter. Similar results have subsequently been established for quantum graphs~\cite{BKDeltaPrime,BK23,BKInfinite,BandSchanzSofer}. More explicitly, given a metric graph, the authors of \cite{BK23} compare the sums of the first $n$ eigenvalues of two self-adjoint Schr\"odinger operators with each other. For finite compact metric graphs, an explicit expression for the mean eigenvalue distance involves some combinatorial data of the graph as well as the potentials associated with the operators. In contrast to this, in \cite{BKInfinite} it is shown that, for a certain infinite quantum graph of infinite total length, the mean average eigenvalue distance has value zero, but that a modified average of the eigenvalue distances yields a more meaningful result. This remarkable and somewhat unexpected effect has its root in a modified Weyl law for the eigenvalue counting function. As shown in \cite{BKInfinite}, this modified Weyl law also leads to a modified \textit{local} Weyl law. Indeed, local Weyl laws constitute a key tool for the derivation of spectral comparison results and they are intimately connected to properties of the heat kernel. As we will see in the following, for discrete graphs, the derivation of a local Weyl law is more direct than in other settings in that it does not involve Weyl's law. In contrast, in many settings including Euclidean domains and metric graphs, there is a direct link between the heat kernel and the Weyl law via Karamata's Tauberian theorem.

In this paper, we obtain spectral comparison results for Laplacians on discrete graphs which are typically infinite (Theorem~\ref{thm:spectral-comparison}) and, to this end, we provide a local Weyl law for such graphs (Proposition~\ref{prop:local-weyl-law}). By doing this, we also recover some results obtained in \cite{BKDiscrete} for normalized Schr\"odinger operators on finite discrete graphs. The surprising feature of the results obtained in this paper is that one has yet again to modify the spectral comparison results as well as the local Weyl law. Indeed, rather than an expression for an \textit{averaged version} of the eigenvalues distances one obtains an expression for the sum of all eigenvalue differences. In this sense, the spectral properties of Laplacians on discrete graphs are quite different from those of Schr\"odinger operators defined in the continuous setting. In addition, the derived spectral comparison results immediately imply an Ambarzumian-type theorem (Corollary~\ref{AmbarzumianGraphs}) and hence a connection to inverse spectral theory is established; we refer to \cite{Ambarz,Borg,Davies:2013,KurasovBook,BKAmba} and references therein.
\section{Basic setup and the main result}\label{sec:setup}
Let $X$ be a non-empty and at most countable set and $b: X \times X \rightarrow [0,\infty)$ symmetric satisfying
$b(x,x) = 0$ 
and $\sum_{y \in X} b(x,y) < \infty$ for all $x \in X$.
Let $c: X \rightarrow [0,\infty)$ and $m:X \rightarrow (0,\infty)$ be two maps and extend $m$ to a measure in the obvious way. We then call $(b,c)$ a \emph{graph} over the discrete measure space $(X,m)$; see \cite{KellerLenzGraphs} for more details.

Denote by $C(X)$ the linear space of real functions and 
\[
\ell^2(X,m) := \bigg\{ f \in C(X) \: : \: \sum_{x \in X}  |f(x)|^2 m(x) < \infty \bigg\}
\]
which, equipped with the scalar product $$\langle f,g\rangle_{\ell^2(X,m)} := \sum_{x \in X} f(x)g(x) m(x)\ , \qquad f,g \in \ell^2(X,m)\ ,$$ is a Hilbert space. We  
let
\[
    \mathcal{D}_{b,c} := \Bigg\{ f \in C(X) \: : \: \sum_{x,y \in X} b(x,y)|f(x)-f(y)|^2 + \sum_{x \in X} c(x) |f(x)|^2 < \infty \Bigg\},
\]
and define the quadratic form $\mathcal{Q}_{b,c}: \mathcal{D}_{b,c} \times \mathcal{D}_{b,c} \rightarrow \mathbb{R}$ by
\[
\mathcal{Q}_{b,c}(f,g) := \frac{1}{2}\sum_{x,y \in X} b(x,y) (f(x)-f(y)) (g(x)-g(y)) + \sum_{x \in X}c(x) f(x)g(x)\ , \quad f,g \in \mathcal{D}_{b,c}\ .
\]
We are interested in self-adjoint realizations of the operator $ \mathcal{L}_{b,c}$ formally acting as
\[
\big(\mathcal{L}_{b,c}f \big)(x) := \frac{1}{m(x)} \sum_{y \in X} b(x,y) (f(x)-f(y)) + \frac{c(x)}{m(x)} f(x)\ , \qquad  \text{$x \in X$}\ .
\]
To this end, let $Q=\mathcal{Q}_{b,c}$ be a positive quadratic form over $(b,c)$ with form domain $D(Q) \subset \ell^2(X,m)$ 
which is closed with respect to the form norm
\[
\Vert f \Vert_{Q}^2 := \Vert f \Vert_{\ell^2(X,m)}^2 + Q(f,f)\ , \qquad f \in D(Q)\ .
\]
We assume that $C_c(X) \subset D(Q)$, where $C_c(X)$ denotes the subspace of functions in  $C(X)$ having {finite support}.

According to the form representation theorem, its associated self-adjoint positive operator $L$ in $\ell^2(X,m)$ with domain $D(L) \subset D(Q)$ satisfies the relation
\[
Q(f,g) =\langle f, L g \rangle_{\ell^2(X,m)}  \qquad \text{for $g \in D(L)$, $f \in D(Q)$}\ .
\]
As demonstrated in \cite{KellerLenzGraphs}, we have $L=\mathcal{L}_{b,c}$ on $D(L)$. Most prominent choices are 
\begin{align*}%\label{eq:infinite-neumann-form}
\QNeu(f,g) &:= \mathcal{Q}_{b,c}(f,g) \qquad \text{for}\  f,g \in D(\QNeu):= \ell^2(X,m) \cap \mathcal{D}_{b,c}\ ,
\end{align*}
and 
\begin{align*}%\label{eq:infinite-dirichlet-form}
\hspace*{-.9cm}\QDir(f,g) &:= \mathcal{Q}_{b,c}(f,g) \qquad \text{for}\  f,g \in D(\QDir):=
\overline{C_c(X)}^{\Vert \cdot \Vert_{\mathcal{Q}_{b,c}}}\ ,
\end{align*}
which correspond to the Neumann and Dirichlet realizations, respectively. By construction, we always have $D(\QDir) \subset D(Q) \subset D(\QNeu)$. Such inclusions hold, in particular, for so-called \emph{Markovian realizations}, see \cite[Definition~3.9 and Theorem~3.11]{KellerLenzGraphs}.  
\begin{remark} As illustrated in \cite{KellerLenzGraphs}, there exist graphs satisfying $D(\QDir)=D(\QNeu)$ and therefore there is a unique form in our sense associated with such a graph. However, there also exist many situations where the inclusion is indeed strict and in which case the corresponding realizations in-between can be characterized {explicitly}, cf.\ \cite{KellerLenzSchmidtSchwarz2019}.
\end{remark}
Regarding spectral comparison results, we now want to compare -- for a fixed discrete measure space $(X,m)$ -- the spectrum of two self-adjoint operators, one defined over the graph $(b,0)$ and the other over $(b,c)$ for some $c:X \rightarrow [0,\infty)$. More explicitly, we want to compare the spectrum of $(L_0,D(L_0))$ associated with the form $(Q_0,D(Q_0))$ defined over a graph $(b,0)$ over $(X,m)$ to the one of $(L_c,D(L_c))$ associated with the form $(Q_c,D(Q_c))$ over the graph $(b,c)$ over $(X,m)$, where $Q_c=\mathcal{Q}_{b,c}$ and $$D(Q_c)=D(Q_0)\cap \bigg\{f\in \ell^2(X,m)\colon \sum_{x\in X} c(x)|f(x)|^2<\infty\bigg\} \subset D(Q_0).$$
In the following, we call such a self-adjoint operator $L_0$ a \textit{realization} and the self-adjoint operator $L_c$ corresponding to $c:X \rightarrow [0,\infty)$ its \textit{induced} realization.
\begin{ass}[Discreteness of the spectrum]\label{ass:discrete-spec}
   Let $L_0$ be a realization associated with $(Q_0,D(Q_0))$ defined over a graph $(b,0)$ over the discrete measure space $(X,m)$. We assume that its form domain $D(Q_0)$ is compactly embedded in $\ell^2(X,m)$. 
\end{ass}
Following a general argument~cf.~\cite{schmudgen2012unbounded}, the spectrum of $L_0$ is purely discrete under Assumption~\ref{ass:discrete-spec}; in this case we write $\lambda_n(0)$ for the $n$-th eigenvalue of $L_0$. Notice that, whenever $\ell^2(X,m)$ is infinite-dimensional, the eigenvalues diverge to infinity and hence $L_0$ is an unbounded operator. Also, given $X$ is finite, Assumption~\ref{ass:discrete-spec} is superfluous. On the other hand, since $D(Q_0) \subset D(\QNeu_0):= \ell^2(X,m) \cap \mathcal{D}_{b,0}$, one has a compact embedding whenever $D(\QNeu_0)$ is compactly embedded in $\ell^2(X,m)$. This holds, for instance, whenever $m(X) < \infty$ and $X$ satisfies a Sobolev inequality \cite{HuaKSW-23} or is bounded with respect to a certain distance-like function, \cite[Lemma~2.6]{BifMug25}.
\begin{lemma}\label{LemmaDiscreteLC} Consider a realization $L_0$, $c:X \rightarrow [0,\infty)$ and its induced realization $L_c$. If $L_0$ satisfies Assumption~\ref{ass:discrete-spec}, $D(Q_c)$ is also compactly embedded in $\ell^2(X,m)$ and hence $\sigma(L_c)$ is also purely discrete. 
\end{lemma}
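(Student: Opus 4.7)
The plan is to deduce compactness of the embedding $D(Q_c) \hookrightarrow \ell^2(X,m)$ by factoring it through the compact embedding $D(Q_0) \hookrightarrow \ell^2(X,m)$ supplied by Assumption~\ref{ass:discrete-spec}, and then invoke the standard spectral-theoretic fact that a positive self-adjoint operator whose form domain embeds compactly into the ambient Hilbert space has purely discrete spectrum.

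The first step is to observe the inclusion of form domains at the level of sets, which is built into the definition: by construction $D(Q_c) = D(Q_0) \cap \{f\in\ell^2(X,m)\colon \sum_x c(x)|f(x)|^2 <\infty\} \subset D(Q_0)$. Since $c\geq 0$, for every $f\in D(Q_c)$ one has
\[
\|f\|_{Q_c}^2 = \|f\|_{\ell^2(X,m)}^2 + Q_0(f,f) + \sum_{x\in X} c(x)|f(x)|^2 \;\geq\; \|f\|_{Q_0}^2,
\]
so the identity map $\iota\colon (D(Q_c),\|\cdot\|_{Q_c}) \to (D(Q_0),\|\cdot\|_{Q_0})$ is continuous with operator norm at most one. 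Since $D(Q_c)$ is a Hilbert space in its own right (as the form $Q_c$ is closed — this is the only point where a tiny check is needed, and it follows routinely because $Q_c$ is the sum of the closed form $Q_0$ and the closed positive multiplication form by $c$), the map $\iota$ makes sense as a bounded linear operator between Hilbert spaces.

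Composing $\iota$ with the compact embedding $j\colon(D(Q_0),\|\cdot\|_{Q_0})\hookrightarrow \ell^2(X,m)$ granted by Assumption~\ref{ass:discrete-spec} yields the inclusion $D(Q_c)\hookrightarrow \ell^2(X,m)$ as the composition $j\circ\iota$ of a bounded and a compact operator, which is therefore compact. Hence $D(Q_c)$ is compactly embedded in $\ell^2(X,m)$.

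For the discreteness of $\sigma(L_c)$ one then appeals to the general principle (see \cite{schmudgen2012unbounded}, as is done in the paragraph following Assumption~\ref{ass:discrete-spec}): if a positive self-adjoint operator has a form domain that embeds compactly into the underlying Hilbert space, then its resolvent is compact and its spectrum is purely discrete. Applying this to $L_c$, whose form domain $D(Q_c)$ we just showed to be compactly embedded in $\ell^2(X,m)$, gives the claim. There is no real obstacle in this argument; the only subtle point is to make sure that $Q_c$ is indeed closed so that $(D(Q_c),\|\cdot\|_{Q_c})$ is a Hilbert space, which is automatic since adding a nonnegative closed form to a closed form preserves closedness.
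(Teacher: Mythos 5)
Your proposal is correct and follows essentially the same route as the paper: the paper's one-line proof simply invokes the definition of compact embedding together with $D(Q_c)\subset D(Q_0)$ and $c\geq 0$, which is exactly what you spell out via the norm bound $\Vert f\Vert_{Q_c}\geq \Vert f\Vert_{Q_0}$ and the factorization through the compact embedding of $D(Q_0)$. Your extra remark on the closedness of $Q_c$ (sum of two closed nonnegative forms) is a fine, correct elaboration of a point the paper takes for granted in defining the induced realization.
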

\begin{proof} The statement follows directly from the definition of a compact embedding, taking into account the definition of $D(Q_c)$ and $c\geq 0$ on $X$.
\end{proof}
\begin{example}\label{ExampleI} Let $X=\mathbb{N}$ with $m(n):=n^{-4}$ be given. We consider the path graph over $(X,m)$ defined via $b(n+1,n)=b(n,n+1)=n^2$ and $b(n,m)=0$ whenever $|n-m| > 1$ (cf.\ Figure \ref{fig:path-graph} below). Moreover, we suppose that $c(n)=0$ for all $n \in \mathbb{N}$. Then, employing~\cite[Lemma~2.6]{BifMug25}, one has that $D(Q^{(N)})$ is compactly embedded in $\ell^2(X,m)$ and hence any self-adjoint realization $L_0$ has purely discrete spectrum.
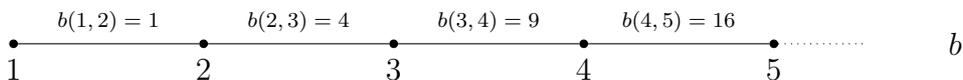
\begin{figure}[h]
\begin{tikzpicture}[scale=0.50]
      \tikzset{enclosed/.style={draw, circle, inner sep=0pt, minimum size=.10cm, fill=black}}

      \node[enclosed, label={below: $3$}] (C) at (10,4) {};
      \node[enclosed, label={below: $1$}] (A) at (0,4) {};
      \node[enclosed, label={below: $2$}] (B) at (5,4) {};
      \node[enclosed, label={below: $4$}] (D) at (15,4) {};
      \node[enclosed, label={below: $5$}] (E) at (20,4) {};
      \node[enclosed, label = {right: \qquad $b$}, white] (F) at (22.5,4) {};

      \draw (A) -- (B) node[midway, above] (edge1) {\tiny $b(1,2)=1$};
      \draw (B) -- (C) node[midway, above] (edge2) {\tiny $b(2,3)=4$};
      \draw (C) -- (D) node[midway, above] (edge3) {\tiny $b(3,4)=9$};
      \draw (D) -- (E) node[midway, above] (edge4) {\tiny $b(4,5)=16$};
      \draw (E) edge[dotted] (F) node[midway, above] (edge4) {};
     \end{tikzpicture}
     \vspace{-1.5cm}
     \caption{The infinite path graph $b$ over $(\mathbb{N},m)$.}\label{fig:path-graph}
     \end{figure}
\end{example}
Whenever an induced realization $L_c$ has purely discrete spectrum, we likewise denote its eigenvalues by $\lambda_n(c)$, $n \in \mathbb{N}$. We can now formulate the main result of this note.
\begin{theorem}[Spectral comparison for discrete graphs]\label{thm:spectral-comparison}
    Let $L_0$ satisfy Assumption \ref{ass:discrete-spec}, $c\colon X\to[0,\infty)$ and $L_c$ the corresponding induced realization. 
    Then, we have
    \begin{align*}%\label{eq:spectral-comparison}
        \sum_{n=1}^{\dim \ell^2(X,m)} \big( \lambda_n(c) - \lambda_n(0) \big) = \sum_{x \in X} \frac{c(x)}{m(x)}\ ,
    \end{align*}
    where the right-hand side equals $+\infty$ if the map $X \ni x \mapsto \frac{c(x)}{m(x)}$ does not belong to $\ell^1(X, \mathbf{1})$. Note that both sums in the above formula are finite iff the underlying set $X$ is finite. 
\end{theorem}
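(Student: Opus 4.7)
The plan is to sandwich each partial sum $\sum_{n=1}^N\bigl(\lambda_n(c)-\lambda_n(0)\bigr)$ between two Ky~Fan--type expressions and then evaluate both sides via the local Weyl law of Proposition~\ref{prop:local-weyl-law}. By Assumption~\ref{ass:discrete-spec} and Lemma~\ref{LemmaDiscreteLC}, both operators admit orthonormal eigenbases of $\ell^2(X,m)$ consisting of eigenvectors $\psi_n^0\in D(Q_0)$ and $\psi_n^c\in D(Q_c)$, respectively. In this discrete setting the local Weyl law is essentially Parseval's identity applied to the normalized delta basis $\{m(x)^{-1/2}\delta_x\}_{x\in X}$ of $\ell^2(X,m)$, and yields $\sum_n|\psi_n^\bullet(x)|^2=1/m(x)$ for every $x\in X$. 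An application of Tonelli then delivers the central identity
\[
\sum_n\sum_{x\in X} c(x)\,|\psi_n^\bullet(x)|^2 \;=\; \sum_{x\in X}\frac{c(x)}{m(x)} \;\in\;[0,\infty],
\]
valid for either eigenbasis $\psi_n^0$ or $\psi_n^c$.

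For the lower bound, the inclusion $D(Q_c)\subset D(Q_0)$ together with the decomposition $Q_0(f,f)=Q_c(f,f)-\sum_x c(x)|f(x)|^2$ makes $\{\psi_1^c,\ldots,\psi_N^c\}$ admissible in the variational principle for $\sum_{n=1}^N\lambda_n(0)$, so that
\[
\sum_{n=1}^N\lambda_n(0)\;\leq\;\sum_{n=1}^N Q_0(\psi_n^c,\psi_n^c)\;=\;\sum_{n=1}^N\lambda_n(c)\;-\;\sum_{n=1}^N\sum_{x\in X} c(x)|\psi_n^c(x)|^2.
\]
Letting $N$ tend to $\dim\ell^2(X,m)$ yields $\sum_n\bigl(\lambda_n(c)-\lambda_n(0)\bigr)\geq\sum_x c(x)/m(x)$. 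For the matching upper bound, if $\sum_x c(x)/m(x)=+\infty$ there is nothing left to prove; otherwise the central identity forces $\sum_x c(x)|\psi_n^0(x)|^2<\infty$ for every $n$, hence $\psi_n^0\in D(Q_c)$, and the variational principle applied to $L_c$ with the test family $\{\psi_1^0,\ldots,\psi_N^0\}$ produces the reverse inequality
\[
\sum_{n=1}^N\lambda_n(c)\;\leq\;\sum_{n=1}^N Q_c(\psi_n^0,\psi_n^0)\;=\;\sum_{n=1}^N\lambda_n(0)\;+\;\sum_{n=1}^N\sum_{x\in X} c(x)|\psi_n^0(x)|^2,
\]
whence the desired upper bound upon passing to the limit.

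The step I expect to require the most care is precisely the admissibility of $\psi_n^0$ as a test vector for $L_c$: a~priori these eigenfunctions only lie in $D(Q_0)$, and a naive application of the variational upper bound would be unjustified. The local Weyl law circumvents this obstruction exactly in the regime where the upper bound is nontrivial, namely when $\sum_x c(x)/m(x)<\infty$; in the complementary regime the lower bound already closes the argument, so that no approximation by truncated potentials is needed. The concluding remark regarding finiteness of the two sums versus finiteness of $X$ is then read off the established identity, together with the observation that $\sum_{n=1}^{\dim\ell^2(X,m)}$ genuinely ranges over $|X|$ summands.
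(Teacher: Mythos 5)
Your proof is correct, but it follows a genuinely different route from the paper. The paper interpolates between the two operators via $\tau\mapsto L_{\tau c}$, invokes a Hadamard-type formula (Proposition~\ref{prop:hadamard}, which needs $\frac{c}{m}\in\ell^\infty(X,\mathbf 1)$ and hence the form-domain equality lemma), integrates the derivative over $\tau\in[0,1]$, and passes to the limit $N\to\infty$ by dominated convergence combined with the local Weyl law; the case $\frac{c}{m}\notin\ell^1(X,\mathbf 1)$ is then handled separately by truncating $c$ and using min-max monotonicity. You instead sandwich the partial sums $\sum_{n=1}^N(\lambda_n(c)-\lambda_n(0))$ via the Ky Fan trace principle in its quadratic-form version: testing $Q_0$ with the eigenfunctions of $L_c$ gives the lower bound unconditionally (so the divergent case needs no truncation), while in the summable case the local Weyl law guarantees $\sum_x c(x)|\psi_n^0(x)|^2<\infty$, making the eigenfunctions of $L_0$ admissible for $Q_c$ and yielding the matching upper bound; both bounds are then evaluated by Tonelli together with Proposition~\ref{prop:local-weyl-law}. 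Your key admissibility step is handled correctly, and the form decomposition $Q_c(f,f)=Q_0(f,f)+\sum_x c(x)|f(x)|^2$ on $D(Q_c)$ is immediate from the definitions. What your approach buys: it bypasses the Hadamard formula, the almost-everywhere differentiability discussion, the $\ell^\infty$ hypothesis and Lemma on form-domain equality, and the truncation argument, at the price of invoking the (standard, but here unproved) Ky Fan principle for sums of eigenvalues of semibounded operators with discrete spectrum; you should state or cite that principle explicitly, just as the paper cites \cite{LSHadamard,Kat66} for its perturbation formula. Your side remark that the local Weyl law is just Parseval's identity for the orthonormal eigenbasis against $m(x)^{-1}\mathbf 1_{\{x\}}$ is also accurate and is even simpler than the paper's heat-kernel/Mercer derivation, and your reading of the final finiteness remark (both sums have $|X|$ summands) agrees with the intended one.
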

\begin{remark} Let $X$ be infinite. Then, under Assumption~\ref{ass:discrete-spec} and $\frac{c}{m} \in \ell^1(X,\mathbf{1})$, Theorem~\ref{thm:spectral-comparison} implies that $\left(\lambda_n(c) - \lambda_n(0)\right)_{n\in \mathbb{N}}$ forms a null sequence. This therefore shows that the two (unbounded) operators $L_0$ and $L_c$ are asymptotically isospectral; see~\cite{KurasovSuhrAsymp} for a related discussion.
\end{remark}
\begin{example} Consider Example~\ref{ExampleI} where $X=\mathbb{N}$ and let $c:X\rightarrow [0,\infty)$ be defined via $c(n):=n^2$, $n \in \mathbb{N}$.
Then, applying Theorem~\ref{thm:spectral-comparison}, we obtain
\begin{equation*}
  \sum_{n=1}^\infty \big( \lambda_n(c) - \lambda_n(0) \big) = \sum_{n \in \mathbb{N}} \frac{1}{n^2}=\frac{\pi^2}{6}\ .
\end{equation*}
\end{example}
\section{Proof of the main result}\label{sec:proof}

It is well-known that any induced realization $L_c$ %\in \{ \LNeu, \LDir \}
generates a strongly continuous semigroup $(\e^{-tL_c})_{t \geq 0}$ consisting of integral operators on $\ell^2(X,m)$, i.e., there exists a map $p^c : [0,\infty) \times X \times X \rightarrow \mathbb{R}$, the \emph{heat kernel}, such that
\[
\big( \e^{-tL_c} f \big)(x) = \sum_{y \in X} p^c_t(x,y)f(y)m(y), \qquad x\in X, \ t>0, \ f\in\ell^2(X,m).
\]
Furthermore, this identity readily implies 
\[
p^c_t(x,y) = \frac{\langle \mathbf{1}_{\{x\}}, \e^{-tL_c} \mathbf{1}_{\{y\}}\rangle_{\ell^2(X,m)}}{m(x)m(y)}, \qquad x,y\in X, \ t>0.
\]
This yields the following statement which represents a counterpart to \cite[Theorem~4.1]{BHJ}, \cite[Proposition~4]{BK23} and \cite[Lemma~13]{BKInfinite} for discrete graphs.
\begin{proposition}[Local Weyl law]\label{prop:local-weyl-law}
    Let $L_0$ satisfy Assumption \ref{ass:discrete-spec}, $c\colon X\to[0,\infty)$ and
    $L_c$ be the induced realization. Further, let $f_n^c \in \ell^2(X,m)$, $n=1,\dots,\dim \ell^2(X,m)$, denote 
    %the 
    corresponding orthogonal and normalized eigenfunctions to the eigenvalues $\lambda_n(c)$. Then, one has
    \begin{align*}
        \sum_{n=1}^{\dim \ell^2(X,m)} \vert f_n^c(x) \vert^2  = \frac{1}{m(x)}, \qquad x \in X.
    \end{align*}
\end{proposition}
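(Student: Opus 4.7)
The key observation is that, since $(X,m)$ is a discrete measure space, every indicator $\mathbf{1}_{\{x\}}$ lies in $\ell^2(X,m)$ with $\|\mathbf{1}_{\{x\}}\|^2_{\ell^2(X,m)} = m(x)$. My plan is therefore to apply Parseval's identity to the normalized vector $e_x := m(x)^{-1/2} \mathbf{1}_{\{x\}}$ with respect to the orthonormal eigenbasis $(f_n^c)_n$ of $\ell^2(X,m)$, which exists by Lemma~\ref{LemmaDiscreteLC} together with the spectral theorem for self-adjoint operators of compact resolvent.

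Concretely, I would first compute the inner products
$$\langle e_x, f_n^c \rangle_{\ell^2(X,m)} = \frac{1}{\sqrt{m(x)}} \sum_{y \in X} \mathbf{1}_{\{x\}}(y) f_n^c(y) m(y) = \sqrt{m(x)}\, f_n^c(x).$$
Parseval's identity then yields
$$1 = \|e_x\|^2_{\ell^2(X,m)} = \sum_{n=1}^{\dim \ell^2(X,m)} \big|\langle e_x, f_n^c \rangle_{\ell^2(X,m)}\big|^2 = m(x) \sum_{n=1}^{\dim \ell^2(X,m)} |f_n^c(x)|^2,$$
and dividing by $m(x)$ delivers the claim.

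An alternative route, more in keeping with the heat-kernel setup recalled just before the proposition, is to use the spectral decomposition of $\e^{-tL_c}$ to obtain the pointwise identity
$$p^c_t(x,x) = \sum_{n=1}^{\dim \ell^2(X,m)} \e^{-t\lambda_n(c)}\, |f_n^c(x)|^2,$$
and then to send $t \downarrow 0$. Strong continuity of the semigroup at $t=0$ gives $\langle \mathbf{1}_{\{x\}}, \e^{-tL_c}\mathbf{1}_{\{x\}}\rangle_{\ell^2(X,m)} \to m(x)$, hence $p^c_t(x,x) \to 1/m(x)$; on the other hand, since $\lambda_n(c) \geq 0$ forces $\e^{-t\lambda_n(c)} \nearrow 1$ as $t \searrow 0$, monotone convergence allows one to exchange limit and sum on the right, yielding again the desired equality.

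The main obstacle is conceptually mild: what usually forces invoking Karamata's Tauberian theorem or Weyl's law in the continuous setting is that $\delta$-distributions are not $L^2$-functions, so that neither $\langle \delta_x, \e^{-tL}\delta_x\rangle$ nor the Parseval step above makes literal sense pointwise. In the present discrete framework the $\mathbf{1}_{\{x\}}$ are bona fide $\ell^2$-vectors of known norm, which is precisely the feature short-circuiting the usual machinery and explaining why the local Weyl law is essentially an immediate consequence of Parseval in this setting.
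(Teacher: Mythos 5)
Your proof is correct, and your primary argument is genuinely different from (and more elementary than) the one in the paper. The paper works through the heat kernel on the diagonal: it invokes Mercer's theorem to write $p^c_t(x,x)=\sum_n \e^{-t\lambda_n(c)}|f_n^c(x)|^2$, uses strong continuity of the semigroup to compute $\lim_{t\to 0^+}p^c_t(x,x)=1/m(x)$, and then combines monotonicity in $t$ with Fatou's lemma to squeeze $\sum_n|f_n^c(x)|^2$ between $p^c_s(x,x)$ and $1/m(x)$. Your Parseval argument bypasses all of this: since Lemma~\ref{LemmaDiscreteLC} and the spectral theorem give a complete orthonormal eigenbasis $(f_n^c)_n$, expanding the normalized indicator $e_x=m(x)^{-1/2}\mathbf{1}_{\{x\}}$ in that basis yields the identity in one line, with no semigroup, no Mercer, and no limit in $t$. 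Your alternative heat-kernel route is essentially the paper's proof, with monotone convergence (using $\e^{-t\lambda_n(c)}\nearrow 1$ as $t\searrow 0$) replacing the Fatou step — a clean and arguably tidier variant. What the paper's longer route buys is the structural parallel to the continuous-setting proofs it cites (where the diagonal heat kernel is the only available handle because $\delta_x\notin L^2$), which makes visible exactly where the discrete setting short-circuits the Tauberian machinery; your closing remark identifies this point correctly, namely that $\mathbf{1}_{\{x\}}\in\ell^2(X,m)$ with known norm is the decisive feature.
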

\begin{proof}
As $\sigma(L_c)$ is purely discrete by Lemma~\ref{LemmaDiscreteLC} and $L_c$ is associated with a positive form, we employ Mercer's theorem to obtain
\begin{align*}%\label{eq:mercer}
    p^c_t(x,x) = \sum_{n=1}^{\dim \ell^2(X,m)} \e^{-t\lambda_n} \vert f_n^c(x) \vert^2
\end{align*}
for all $t>0$ and $x \in X$, where the right-hand side converges uniformly in $x \in X$ for fixed $t>0$. 
Due to continuity of scalar products and the strong continuity of the semigroup $(\e^{-tL_c})_{t \geq 0}$, we have 
\begin{align*}
    \lim_{t \rightarrow 0^+} p^c_t(x,x) = \frac{\Big\langle \mathbf{1}_{\{x\}}, \lim\limits_{t \rightarrow 0^+} \e^{-tL_c}\mathbf{1}_{\{x\}} \Big\rangle_{\ell^2(X,m)}}{m(x)^2} = \frac{\Vert \mathbf{1}_{\{x\}} \Vert^2_{\ell^2(X,m)}}{m(x)^2} = \frac{1}{m(x)}
\end{align*}
for every $x \in X$.
 Moreover, using Mercer's identity and Fatou's lemma, we conclude for every $x \in X$
 and every $s > 0$ 
\begin{align*}
    p^c_s(x,x) &\leq 
    \sum_{n=1}^{\dim \ell^2(X,m)} \lim_{t \rightarrow 0^+} \e^{-t\lambda_n(c)}\vert f_n^c(x) \vert^2  \leq  \liminf_{t \rightarrow 0^+} \sum_{n=1}^{\dim \ell^2(X,m)} \e^{-t\lambda_n(c)}\vert f_n^c(x) \vert^2 \\ &= \lim_{t \rightarrow 0^+} p_t^c(x,x)= \frac{1}{m(x)}\ .
\end{align*}
The claim follows considering $s \rightarrow 0^+$.
\end{proof}
	In order to relate the eigenvalues of $L_c$ and $L_0$, we want to ensure that the respective form domains are the same. 
    \begin{lemma}
         Let $L_0$ be a realization over the graph $(b,0)$, $c\colon X\to[0,\infty)$ and
     $L_c$ the induced realization over the graph $(b,c)$. Moreover, suppose $\frac{c}{m}\in\ell^\infty(X,\mathbf{1})$. Then, we have 
        \[
        D(Q_0) = D(Q_c)\ .
        \]
    \end{lemma}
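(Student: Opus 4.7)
The plan is to prove the two inclusions separately, with one being immediate from the definition of $D(Q_c)$ and the other reducing to a short weighted-$\ell^2$ estimate that uses the hypothesis $\frac{c}{m}\in\ell^\infty(X,\mathbf{1})$.

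First I would record that the inclusion $D(Q_c)\subset D(Q_0)$ holds by construction, since $D(Q_c)$ was defined precisely as the intersection of $D(Q_0)$ with $\{f\in\ell^2(X,m):\sum_x c(x)|f(x)|^2<\infty\}$.

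For the reverse inclusion $D(Q_0)\subset D(Q_c)$, I would fix $f\in D(Q_0)$. Since $D(Q_0)\subset \ell^2(X,m)$, the quantity $\sum_{x\in X}|f(x)|^2 m(x)$ is finite. I would then factor out $m(x)$ from the potential term to write
\[
\sum_{x\in X} c(x)|f(x)|^2 \;=\; \sum_{x\in X} \frac{c(x)}{m(x)}\, |f(x)|^2\, m(x) \;\leq\; \Bigl\Vert \frac{c}{m}\Bigr\Vert_{\ell^\infty(X,\mathbf{1})} \Vert f\Vert_{\ell^2(X,m)}^2,
\]
which is finite by the boundedness hypothesis on $c/m$. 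Hence $f$ lies in the additional constraint set that cuts out $D(Q_c)$ from $D(Q_0)$, so $f\in D(Q_c)$.

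Since both inclusions hold, the two form domains coincide. There is no real obstacle in this argument: the hypothesis $\frac{c}{m}\in\ell^\infty(X,\mathbf{1})$ is tailored precisely so that the weight $c$ dominates the ambient weight $m$ only by a bounded factor, turning finiteness with respect to $m$ into finiteness with respect to $c$. The only minor point worth stating explicitly is that $D(Q_0)\subset\ell^2(X,m)$ as required (this is part of the standing setup for realizations).
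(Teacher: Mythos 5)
Your argument is correct and is essentially identical to the paper's proof: the inclusion $D(Q_c)\subset D(Q_0)$ is immediate from the definition, and the reverse inclusion follows from the same weighted estimate $\sum_x c(x)|f(x)|^2 \leq \Vert \tfrac{c}{m}\Vert_{\ell^\infty}\Vert f\Vert_{\ell^2(X,m)}^2$. Nothing to add.
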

	\begin{proof}
	    By definition of $Q_c$, the inclusion $D(Q_0) \supset D(Q_c)$ is clear. If conversely $f \in D(Q_0)$
        it is sufficient to show that $\sum_{x \in X} c(x) |f(x)|^2 < \infty$ to conclude that $f \in D(Q_c)$. As $\frac{c}{m} \in \ell^\infty(X)$ and $f \in \ell^2(X,m)$, we deduce
        \[
        \sum_{x \in X} c(x) |f(x)|^2 = \sum_{x \in X} \frac{c(x)}{m(x)}  |f(x)|^2m(x) \leq \left\Vert \frac{c}{m} \right\Vert_{\ell^\infty(X)} \Vert f \Vert_{\ell^2(X,m)}^2 < \infty\ ,
        \]
        immediately yielding the claim.
	\end{proof}
    Whenever the form domains agree, similar arguments as in \cite{LSHadamard} yield the following; see also~\cite{Kat66}.
    \begin{proposition}[Hadamard-type formula]\label{prop:hadamard}
        Let $L_0$ satisfy Assumption \ref{ass:discrete-spec}, $c\colon X\to[0,\infty)$ and for any $\tau\in[0,1]$ let $L_{\tau c}$ be the induced realization of the function $\tau c$. Further, for any $\tau\in[0,1]$, let $f_n^{\tau c} \in \ell^2(X,m)$, $n=1,\dots,\dim \ell^2(X,m)$, denote 
    the 
    corresponding orthogonal and normalized eigenfunctions to the eigenvalues $\lambda_n(\tau c)$. Assume $\frac{c}{m} \in \ell^\infty(X,\mathbf{1})$. Then, the map $[0,1] \ni \tau \mapsto \lambda_n(\tau c)$ is differentiable almost everywhere with
        \begin{align*}%\label{eq:hadamard}
            \frac{\mathrm{d}}{\mathrm{d}\tau} \lambda_n(\tau c) = \sum_{x \in X} c(x) \vert f_n^{\tau c}(x) \vert^2\ .
        \end{align*}
    \end{proposition}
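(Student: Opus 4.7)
My plan is to reduce the statement, via the previous lemma, to a standard perturbation-theoretic setup, and then to read off the derivative from a Feynman-Hellmann-type identity.

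First I would use the preceding lemma to identify the form domains: since $\tau c/m \in \ell^\infty(X,\mathbf{1})$ for every $\tau \in [0,1]$, we have $D(Q_{\tau c}) = D(Q_0)$ for all such $\tau$. On this common domain,
\[
Q_{\tau c}(f,f) = Q_0(f,f) + \tau \sum_{x \in X} c(x) |f(x)|^2
\]
is affine in $\tau$. Equivalently, the multiplication operator $V\colon \ell^2(X,m) \to \ell^2(X,m)$ defined by $(Vf)(x) = \frac{c(x)}{m(x)} f(x)$ is bounded, self-adjoint and non-negative, and $L_{\tau c} = L_0 + \tau V$ defines a self-adjoint holomorphic family of type (A) in the sense of Kato on the whole real line.

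Second, since $V \geq 0$, the form $Q_{\tau c}$ is monotone non-decreasing in $\tau$, so the min-max principle yields that each $\tau \mapsto \lambda_n(\tau c)$ is monotone non-decreasing on $[0,1]$. By Lebesgue's differentiation theorem for monotone functions, the map is then differentiable almost everywhere on $[0,1]$.

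To evaluate the derivative I would invoke Kato's analytic perturbation theory as in \cite{Kat66,LSHadamard}: for a self-adjoint holomorphic family of type (A), the eigenvalues admit, away from a discrete exceptional set of eigenvalue crossings, a real-analytic local reparametrisation together with real-analytic choices of normalised eigenvectors, and the Feynman-Hellmann identity then gives
\[
\frac{\mathrm{d}}{\mathrm{d}\tau}\lambda_n(\tau c) = \langle f_n^{\tau c}, V f_n^{\tau c}\rangle_{\ell^2(X,m)} = \sum_{x \in X} \frac{c(x)}{m(x)} |f_n^{\tau c}(x)|^2 m(x) = \sum_{x \in X} c(x) |f_n^{\tau c}(x)|^2
\]
at every point of differentiability. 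The principal obstacle is the familiar one for eigenvalues enumerated in increasing order: at crossings the increasing-order labelling may disagree with the analytic labelling produced by Kato-Rellich, which is precisely why the claim is only made almost everywhere. Since the crossing set is (at most) discrete on $[0,1]$, the a.e.\ differentiability together with the derivative formula is unaffected.
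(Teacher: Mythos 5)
Your argument is correct and follows essentially the route the paper intends: the paper gives no detailed proof but delegates to analytic perturbation theory as in \cite{Kat66,LSHadamard}, which is exactly your reduction to the bounded self-adjoint perturbation $L_{\tau c}=L_0+\tau V$ of type (A) on the common form domain, monotonicity via min-max plus Lebesgue's theorem for almost-everywhere differentiability, and the Feynman--Hellmann identity away from crossings. The only slip is cosmetic: with countably many analytic eigenvalue branches the crossing set need not be discrete in $[0,1]$, only countable (each pair of non-identical branches meets in a discrete set), but this is still a null set and so the almost-everywhere conclusion is unaffected.
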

    Using this, we are now in the position to prove our main result.
    \begin{proof}[Proof of Theorem \ref{thm:spectral-comparison}]
        We prove the statement for the more involved infinite case where $\dim \ell^2(X,m)=\infty$; the finite case is then straightforward. We first restrict to the case where $\frac{c}{m} \in \ell^1(X,\mathbf{1})$: %as $\ell^1(X,m) \subset \ell^\infty(X,\mathbf{1})$ 
        by the Hadamard-type formula, we may write for any $N\in\NN$
        \begin{align*}%\label{eq:first-equality-main-thm-proof}
            \sum_{n=1}^N \big( \lambda_n(c) - \lambda_n(0) \big) = \int_0^1 \sum_{x \in X} c(x) \sum_{n=1}^N \vert f_n^{\tau c}(x) \vert^2 \ \mathrm{d}\tau\ .
        \end{align*}
       For every $N \in \mathbb{N}$ and every $\tau \in [0,1]$, the local Weyl law provided by Proposition \ref{prop:local-weyl-law} yields  $\sum_{n=1}^N \vert f_n^{\tau c}(x) \vert^2 \leq \sum_{n=1}^\infty \vert f_n^{\tau c}(x) \vert^2 = \frac{1}{m(x)}$. 
       Hence, we get 
        \begin{align*}
            \sum_{x \in X} c(x) \sum_{n=1}^N \vert f_n^{\tau c}(x) \vert^2 \leq \left\Vert \frac{c}{m} \right\Vert_{\ell^1(X)} < \infty\ ,
        \end{align*}
        such that the sequence $(\sum_{x \in X} c(x) \sum_{n=1}^N \vert f_n^{\tau c}(x) \vert^2)_{N \in \mathbb{N}}$ is bounded uniformly in $\tau \in [0,1]$. Therefore, due to dominated convergence, we arrive at
        \begin{align*}
            \sum_{n=1}^\infty \big( \lambda_n(c) - \lambda_n(0) \big) &= \int_0^1 \sum_{x \in X} c(x) \lim_{N \rightarrow \infty} \bigg(\sum_{n=1}^N \vert f_n^{\tau c}(x) \vert^2 \ \bigg)\mathrm{d}\tau \\&= \int_0^1 \sum_{x \in X} \frac{c(x)}{m(x)} \ \mathrm{d}\tau = \sum_{x \in X} \frac{c(x)}{m(x)}\ .
        \end{align*}

        Next, assume $\frac{c}{m} \notin \ell^1(X,\mathbf{1})$: similar to \cite[Theorem~9]{BKInfinite} we enumerate the set of vertices $X = \{ x_n \}$ and  consider the family of maps $c_M: X \rightarrow [0,\infty)$, $M \in \mathbb{N}$, given by
        \begin{align*}
            c_M(x_n) := \begin{cases} c(x_n), & \text{if $1 \leq n \leq M$}, \\ 0, & \text{else.}
            \end{cases}
        \end{align*}
        Then, clearly $\frac{c_M}{m} \in \ell^1(X,\mathbf{1})$ for every $M \in \mathbb{N}$ and according to the min-max principle and the first part of the proof, we deduce
        \begin{align*}%\label{eq:inequality-fixed-m}
            \sum_{n=1}^\infty \big( \lambda_n(c) - \lambda_n(0) \big) \geq \sum_{n=1}^\infty \big( \lambda_n(c_M) - \lambda_n(0) \big) = \sum_{x \in X} \frac{c_M(x)}{m(x)} = \sum_{n=1}^M \frac{c(x_n)}{m(x_n)}
        \end{align*}
        for every $M \in \mathbb{N}$. Letting $M\to\infty$, the statement follows. 
    \end{proof}
    We finish this note with an interesting and straightforward application of Theorem~\ref{thm:spectral-comparison}, namely, we shall prove an Ambarzumian-type theorem similar to what was done in \cite{BKAmba} in the case of quantum graphs, see also \cite{Davies:2013,BKS,KurasovSuhrAsymp}. This result falls into the realm of inverse spectral theory which seems to have been initiated by the work of Ambarzumian~\cite{Ambarz} and later Borg~\cite{Borg}. For an overview regarding Ambarzumian-type theorems for Schrödinger operators on quantum graphs we refer to \cite{KurasovBook} and references therein.
    \begin{cor}[Ambarzumian-type theorem for discrete graphs]\label{AmbarzumianGraphs}
         Let $L_0$ satisfy Assumption~\ref{ass:discrete-spec}, $c\colon X\to[0,\infty)$ and $L_c$ be the induced realization. Assume $\frac{c}{m} \in \ell^1(X,\mathbf{1})$. If $\lambda_n(c) = \lambda_n(0)$ for all $n=1,\dots,\dim \ell^2(X,m)$, then we have  $c = 0$.
    \end{cor}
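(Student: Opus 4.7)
The plan is to apply Theorem~\ref{thm:spectral-comparison} directly. Under the hypothesis $\lambda_n(c) = \lambda_n(0)$ for all admissible $n$, the left-hand side of the spectral comparison identity
\[
\sum_{n=1}^{\dim \ell^2(X,m)} \big(\lambda_n(c) - \lambda_n(0)\big) = \sum_{x \in X} \frac{c(x)}{m(x)}
\]
collapses to zero, so the right-hand side must vanish as well. Since $\frac{c}{m} \in \ell^1(X,\mathbf{1})$, the right-hand side is a convergent series of non-negative terms, and its vanishing forces $\frac{c(x)}{m(x)} = 0$ pointwise. Because $m(x) > 0$ on all of $X$ by definition of the measure, this in turn gives $c(x) = 0$ for every $x \in X$, which is precisely the claim.

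There is essentially no obstacle here: all the analytic work, in particular the local Weyl law (Proposition~\ref{prop:local-weyl-law}) and the Hadamard-type formula (Proposition~\ref{prop:hadamard}), is already packaged inside Theorem~\ref{thm:spectral-comparison}. The only points worth noting are (i) the well-definedness of $\lambda_n(c)$ as a discrete sequence, which is guaranteed by Lemma~\ref{LemmaDiscreteLC}, and (ii) the fact that termwise non-negativity of $\sum_x c(x)/m(x)$ is what allows the conclusion $c \equiv 0$ rather than a mere integral cancellation. I would therefore write the proof as a two-line argument citing Theorem~\ref{thm:spectral-comparison} and invoking positivity of $c$ and $m$.
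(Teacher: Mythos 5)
Your proof is correct and is exactly the argument the paper intends: the corollary is presented as an immediate consequence of Theorem~\ref{thm:spectral-comparison}, with the vanishing of the left-hand side forcing $\sum_{x\in X} c(x)/m(x)=0$ and hence $c\equiv 0$ by nonnegativity of $c$ and positivity of $m$. No gap; this matches the paper's (unwritten, "straightforward") proof in both substance and length.
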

%	\vspace*{0.5cm}
	
	\subsection*{Acknowledgement}{P.B.~gratefully acknowledges support by the DFG
(Grant 397230547). C.R.~gratefully acknowledges support by the DFG (Grant 540199605) and thanks the FernUniversit\"at in Hagen for the hospitality during his visit.}
	
	%\vspace*{0.5cm}
	
	\def\cprime{$'$} \def\polhk#1{\setbox0=\hbox{#1}{\ooalign{\hidewidth \lower1.5ex\hbox{`}\hidewidth\crcr\unhbox0}}} \def\cprime{$'$} \def\polhk#1{\setbox0=\hbox{#1}{\ooalign{\hidewidth \lower1.5ex\hbox{`}\hidewidth\crcr\unhbox0}}}

\end{document}